\begin{document}

\newcommand{\mmbox}[1]{\mbox{${#1}$}}
\newcommand{\proj}[1]{\mmbox{{\mathbb P}^{#1}}}
\newcommand{\affine}[1]{\mmbox{{\mathbb A}^{#1}}}
\newcommand{\Ann}[1]{\mmbox{{\rm Ann}({#1})}}
\newcommand{\caps}[3]{\mmbox{{#1}_{#2} \cap \ldots \cap {#1}_{#3}}}
\newcommand{\N}{{\mathbb N}}
\newcommand{\Z}{{\mathbb Z}}
\newcommand{\R}{{\mathbb R}}
\newcommand{\K}{{\mathbb K}}
\newcommand{\p}{{\mathbb P}}
\newcommand{\A}{{\mathcal A}}
\newcommand{\RJ}{{\mathcal R}/{\mathcal J}}
\newcommand{\C}{{\mathbb C}}
\newcommand{\CR}{C^r(\hat P)}
\newcommand{\Der}{\mathop{\rm Der}\nolimits}
\newcommand{\Tor}{\mathop{\rm Tor}\nolimits}
\newcommand{\ann}{\mathop{\rm ann}\nolimits}
\newcommand{\Ext}{\mathop{\rm Ext}\nolimits}
\newcommand{\Hom}{\mathop{\rm Hom}\nolimits}
\newcommand{\im}{\mathop{\rm Im}\nolimits}
\newcommand{\rank}{\mathop{\rm rank}\nolimits}
\newcommand{\supp}{\mathop{\rm supp}\nolimits}
\newcommand{\arrow}[1]{\stackrel{#1}{\longrightarrow}}
\newcommand{\coker}{\mathop{\rm coker}\nolimits}
\sloppy
\newtheorem{defn0}{Definition}[section]
\newtheorem{prop0}[defn0]{Proposition}
\newtheorem{conj0}[defn0]{Conjecture}
\newtheorem{thm0}[defn0]{Theorem}
\newtheorem{lem0}[defn0]{Lemma}
\newtheorem{rmk0}[defn0]{Remark}
\newtheorem{corollary0}[defn0]{Corollary}
\newtheorem{example0}[defn0]{Example}
\newcommand{\An}{\mathop{\rm A_n}\nolimits}
\newcommand{\conv}{\mathop{conv}\nolimits}
\newcommand{\Aroot}{\mathop{\rm A}\nolimits}
\newenvironment{defn}{\begin{defn0}}{\end{defn0}}
\newenvironment{prop}{\begin{prop0}}{\end{prop0}}
\newenvironment{conj}{\begin{conj0}}{\end{conj0}}
\newenvironment{thm}{\begin{thm0}}{\end{thm0}}
\newenvironment{lem}{\begin{lem0}}{\end{lem0}}
\newenvironment{cor}{\begin{corollary0}}{\end{corollary0}}
\newenvironment{exm}{\begin{example0}\rm}{\end{example0}}
\newenvironment{rmk}{\begin{rmk0}\rm}{\end{rmk0}}

\newcommand{\defref}[1]{Definition~\ref{#1}}
\newcommand{\propref}[1]{Proposition~\ref{#1}}
\newcommand{\thmref}[1]{Theorem~\ref{#1}}
\newcommand{\lemref}[1]{Lemma~\ref{#1}}
\newcommand{\corref}[1]{Corollary~\ref{#1}}
\newcommand{\exref}[1]{Example~\ref{#1}}
\newcommand{\secref}[1]{Section~\ref{#1}}
\newcommand{\poina}{\pi({\mathcal A}, t)}
\newcommand{\poinc}{\pi({\mathcal C}, t)}
\newcommand{\std}{Gr\"{o}bner}
\newcommand{\jq}{J_{Q}}

\title {Splines on the Alfeld split of a simplex\\
and type A root systems}

\author{Hal Schenck}
\thanks{Schenck supported by  NSF 1068754, NSF 1312071}\address{Schenck: Mathematics Department \\ University of Illinois Urbana-Champaign\\
  Urbana \\ IL 61801\\ USA}
\email{schenck@math.uiuc.edu}

\subjclass[2000]{Primary 41A15, Secondary 13D40, 52C99} \keywords{spline, hyperplane arrangement, dimension formula}

\begin{abstract}
\noindent In \cite{a1}, Alfeld introduced a tetrahedral analog 
$AS(\Delta_3)$ of the Clough-Tocher split of a triangle. A formula for
the dimension of the spline space $C^r_k(AS(\Delta_n))$ is 
conjectured in \cite{fs}. We prove that the graded module 
of $C^r$-splines on the cone over $AS(\Delta_n)$ is isomorphic to the module 
$D^{r+1}(\mbox{A}_n)$ of multiderivations on the type A$_n$ Coxeter arrangement. A theorem of Terao shows that the module of 
multiderivations of a Coxeter arrangement is free and gives
an explicit basis. As a consequence the conjectured formula holds.
\end{abstract}
\maketitle
\vskip -.1in
\vskip -.1in
\section{Introduction}\label{sec:intro}
The Alfeld split $AS(\Delta_n)$ of the $n$-simplex $\Delta_n$ is obtained by adding a single central interior vertex, and then coning over the boundary 
of $\Delta_n$. Hence for $i>0$, every $i+1$-face of $AS(\Delta_n)$ 
corresponds to an $i$-face of $\partial(\Delta_n)$, and $AS(\Delta_n)$
has a single interior vertex. This construction was introduced 
in \cite{a1} as a tetrahedral analog of the Clough-Tocher split of a triangle,
and \cite{fs} makes the following
\begin{conj}$[$Foucart-Sorokina$]$
The dimension of the space $C^r_k(AS(\Delta_n))$ of splines of
degree $\le k$ in $n$-variables over the Alfeld split of $\Delta_n$
is given by 
\[ 
\dim C^r_k(AS(\Delta_n)) = {k+n \choose n} +  \begin{cases} 
n {k+n-\frac{(r+1)(n+1)}{2} \choose n}, \mbox{ if r is odd}\\
{k+n-1-\frac{r(n+1)}{2} \choose n}+\cdots + {k-\frac{r(n+1)}{2} \choose n}, \mbox{ if r is even}.
\end{cases}
\] 
\end{conj}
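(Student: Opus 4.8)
The plan is to follow the route announced in the abstract: identify the graded spline module on the cone $\widehat{AS(\Delta_n)}$ with the multiderivation module $D^{r+1}(\mathrm{A}_n)$, invoke Terao's freeness theorem to obtain an explicit basis, and then read off the Hilbert function of a free graded module. First I would recall the Billera--Rose description: with $R=\K[x_0,\dots,x_n]$ and one copy of $R$ for each of the $n+1$ top-dimensional simplices, $C^r$ of the cone is the submodule of $\bigoplus_{i=0}^n R$ consisting of tuples $(f_0,\dots,f_n)$ with $f_i-f_j\in\langle \ell_{ij}^{\,r+1}\rangle$ for each interior codimension-one face, where $\ell_{ij}$ is a linear form cutting out the corresponding wall. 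Because the spline module of a simplicial complex is exactly the kernel of the map recording these codimension-one agreements, no conditions from lower faces intervene, and the grading is by the common degree of the $f_i$.

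Second, I would normalize the geometry. Taking $\Delta_n=\{\sum x_i=1\}$ with vertices $e_0,\dots,e_n$, the Alfeld vertex is the barycenter $\tfrac1{n+1}(1,\dots,1)$, which cones from the origin to the ray through $(1,\dots,1)$. The top cones $\hat\sigma_i=\mathrm{cone}\big((1,\dots,1),\,e_k:k\neq i\big)$ are pairwise adjacent, and the wall between $\hat\sigma_i$ and $\hat\sigma_j$ is cut out by the unique (up to scalar) form vanishing on $(1,\dots,1)$ and on every $e_k$ with $k\neq i,j$; solving these linear conditions gives $\ell_{ij}=x_i-x_j$. Thus the $\binom{n+1}{2}$ interior walls are exactly the braid hyperplanes of $\mathrm{A}_n$, and the spline conditions read $f_i-f_j\in\langle (x_i-x_j)^{r+1}\rangle$.

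Third, I would note that $(f_0,\dots,f_n)\mapsto \theta=\sum_i f_i\,\partial/\partial x_i$ is a degree-preserving isomorphism onto $D^{r+1}(\mathrm{A}_n)$, since $\theta(x_i-x_j)=f_i-f_j$ and the membership $\theta(x_i-x_j)\in\langle(x_i-x_j)^{r+1}\rangle$ is precisely the spline condition on each hyperplane. Terao's theorem then gives freeness. As $\mathrm{A}_n$ sits in $\K^{n+1}$ non-essentially, with one-dimensional center $x_0=\cdots=x_n$, the module splits off a single degree-$0$ generator $\sum_i\partial/\partial x_i$ (which annihilates every $x_i-x_j$); this generator produces the splines constant across all pieces and accounts for the $\binom{k+n}{n}$ term. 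The remaining essential part has rank $n$, and Terao's formula for constant multiplicity $m=r+1$ on a Weyl arrangement with Coxeter number $h=n+1$ and exponents $1,\dots,n$ gives its generator degrees: for $m=2\ell$ even, the $n$ exponents all equal $\ell h=\tfrac{(r+1)(n+1)}{2}$; for $m=2\ell+1$ odd, they are $\ell h+i=\tfrac{r(n+1)}{2}+i$, $i=1,\dots,n$.

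Finally, I would assemble the Hilbert function. A free graded $R$-module (in $n+1$ variables) on generators of degrees $d_1,\dots,d_{n+1}$ has $\dim_\K M_k=\sum_j\binom{k-d_j+n}{n}$. Feeding in the degree list $\{0\}$ together with the Terao exponents yields
\[
\binom{k+n}{n}+n\binom{k+n-\tfrac{(r+1)(n+1)}{2}}{n}\ (r\ \text{odd}),\qquad
\binom{k+n}{n}+\sum_{i=1}^n\binom{k+n-i-\tfrac{r(n+1)}{2}}{n}\ (r\ \text{even}),
\]
which match the conjectured formula term by term. The one genuinely substantive point is the second step---normalizing the Alfeld geometry so that the walls become the braid forms $x_i-x_j$---together with the bookkeeping in the third step that isolates the degree-$0$ generator coming from the non-essential direction; granting these, Terao's theorem and a routine Hilbert-series computation complete the proof.
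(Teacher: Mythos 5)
Your proposal is correct and follows essentially the same route as the paper: identify $C^r(\widehat{AS(\Delta_n)})$ with $D^{r+1}(\mathrm{A}_n)$, invoke Terao's multiderivation theorem for freeness and the exponents (with the degree-zero derivation $\sum_i \partial/\partial x_i$ supplying the $\binom{k+n}{n}$ term, exactly the paper's remark that Terao's formula omits the constant derivation), and read off the Hilbert function of the resulting free module. Your normalization---placing $\Delta_n$ in the hyperplane $\sum x_i=1$ so the interior walls are literally the braid forms $x_i-x_j$---is a slightly cleaner variant of the paper's explicit vertex choice and determinantal computation of the wall equations, a difference the paper itself renders immaterial by noting projective equivalence of any $n+2$ points in general position.
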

In this brief note, we prove the conjecture. The proof hinges on a
connection between splines and the theory of hyperplane arrangements.

In \cite{b}, Billera showed that splines on a simplicial complex $\Delta$ 
can be obtained as the top homology 
module of a chain complex, and used this to solve a longstanding 
conjecture of Gil Strang. 
Billera-Rose show in \cite{br} that by forming the cone $\hat{\Delta} \subseteq {\mathbb R}^{n+1}$ over $\Delta \subseteq \mathbb{R}^n$, commutative algebra can 
be used to study the dimension of $C^r_k(\Delta)$: $C^r(\hat \Delta)$ is a graded module 
over the polynomial ring $S=\mathbb{R}[x_0,\ldots,x_n]$, and the dimension
of $C^r_k(\Delta)$ is the dimension of the $k^{th}$ graded piece of 
$C^r(\hat \Delta)$. 

A spectral sequence argument in \cite{s1} 
shows that $C^r(\hat \Delta)$ is a free module iff all the lower homology 
modules of a certain chain complex vanish, and Corollary 4.12 of \cite{s1}
shows that in this case the dimension of $C^r_k(\Delta)$ is determined 
entirely by local data. In \cite{gs}, the algebraic geometry of fatpoints 
in projective space is used to study the dimension of quotients of 
ideals generated by powers of linear forms. Such modules appear in the 
chain complex whose top homology module is $C^r(\hat \Delta)$. In \S 2 we 
give more details about these methods, and in \S 3 we show 
Conjecture 1.1 follows from a theorem of Terao \cite{t}. The case $r=1$
was recently proved in \cite{ks}.
\section{Algebraic preliminaries}
Throughout the paper, our references are de Boor \cite{DB} for splines and 
Eisenbud \cite{E} for commutative algebra.
\begin{defn}\label{compC}\cite{s1} For a simplicial complex $\Delta \subseteq \R^n$, 
let $\RJ(\Delta)$ be the complex of $S=\R[x_0,\ldots,x_n]$ modules, 
with differential $\partial_i$ the usual boundary operator in relative
(modulo boundary) homology.
\[
0 \longrightarrow \bigoplus\limits_{\sigma \in \Delta_n} S 
\stackrel{\partial_n}{\longrightarrow} \bigoplus\limits_{\tau \in \Delta_{n-1}^0} S/I_{\tau}  
\stackrel{\partial_{n-1}}{\longrightarrow} \bigoplus\limits_{\psi \in \Delta_{n-2}^0} S/I_{\psi}  
\stackrel{\partial_{n-2}}{\longrightarrow} \ldots
\stackrel{\partial_{1}}{\longrightarrow} \bigoplus\limits_{v \in \Delta_{0}^0} S/I_{v}\longrightarrow 0,
\]
where for an interior $i$-face $\gamma \in \Delta_i^0$, we define 
\[
I_\gamma = \langle l_{\hat \tau}^{r+1} \mid \hat \gamma \subseteq \hat \tau \in \hat \Delta_{n-1} \rangle.
\]
\end{defn}
This differs from the complex in \cite{b}: 
the ideal $I_\gamma$ is generated by $r+1^{st}$ powers of 
(homogenizations of) linear forms whose vanishing defines the hyperplanes 
$\tau$ which contain $\gamma$. As with the complex in \cite{b}, the top 
homology module of $\RJ(\Delta)$ computes splines of smoothness $r$ on $\hat{\Delta}$.
Theorem 4.10 of \cite{s1} shows that if $\Delta$ is a 
topological $n$-ball, then the module
$C^r(\hat \Delta)$ is free iff $H_i(\RJ(\Delta))=0$ for all $i < n$. 
Computing the dimension of the modules which appear in 
the complex $\RJ(\Delta)$ is nontrivial as soon
as $n \ge 3$. Indeed, for ideals generated by powers of linear forms
in three variables, no answer is known, even if the forms are in
general position. This is due to a classical connection from algebraic
geometry known as Macaulay inverse systems, which translates questions
about powers of linear forms into questions about fatpoints, which are
points, with multiplicity, in projective space. For more details on
inverse systems, see \cite{gs}. In fact, for powers of linear forms in
three varibles, inverse systems translate to questions about fatpoints
in $\p^2$, which is the topic of the famous (and open) 
Segre-Harbourne-Gimigliano-Hirschowitz conjecture \cite{m}.

In the case of the Alfeld split of $\Delta_n$, it is easy
to see that at every interior $i$-face there are exactly $n-i+1$ forms 
incident to the face. This means that after a change of variables,
we can assume that the ideals which appear are of the form
\begin{equation}\label{local}
I_\gamma = \langle x_1^{r+1},\ldots,  x_{n-i}^{r+1}, (x_1+\cdots+x_{n-i})^{r+1} \rangle
\end{equation}
For these ideals, the corresponding fatpoint scheme consists
of the coordinate points and the point $(1:1:\cdots :1)$, and the dimension
of $I_{\gamma}$ in any degree is as expected, in the sense of \cite{m}. 
As a consequence, we have
\begin{lem}
Conjecture 1.1 holds if $C^r(\widehat{AS(\Delta_n)})$ is a free $S$-module.
\end{lem}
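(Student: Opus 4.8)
The plan is to assume the hypothesis of the lemma — that $C^r(\widehat{AS(\Delta_n)})$ is a free $S$-module — and to read off its Hilbert function from the complex $\RJ(AS(\Delta_n))$ of \defref{compC}. Since $AS(\Delta_n)$ is a topological $n$-ball, Theorem 4.10 of \cite{s1} identifies freeness of $C^r(\widehat{AS(\Delta_n)})$ with the vanishing $H_i(\RJ(AS(\Delta_n)))=0$ for all $i<n$. Granting this, the complex is exact except at its top term, where the homology is $C^r(\widehat{AS(\Delta_n)})$; by \cite{br} the quantity $\dim C^r_k(AS(\Delta_n))$ is the dimension of the degree-$k$ graded piece of this module.

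Next I would invoke additivity of Hilbert functions along the now-exact complex. Because the homology is concentrated in the top spot, the alternating sum of the Hilbert functions of the terms computes that of $C^r(\widehat{AS(\Delta_n)})$, giving for each $k$
\[
\dim C^r_k(AS(\Delta_n)) = \sum_{p=0}^{n} (-1)^{n-p}\,\dim (F_p)_k,
\]
where $F_n = \bigoplus_{\sigma \in \Delta_n} S$ and $F_p = \bigoplus_{\gamma \in \Delta_p^0} S/I_\gamma$ for $p<n$. Freeness enters only through exactness below the top, which removes the lower-homology corrections from the Euler characteristic and makes this an identity in every degree; this is the content of Corollary 4.12 of \cite{s1}, that for free modules the Hilbert function is governed by local data alone.

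The remaining input is combinatorial and local. I would record that, for $0 \le p \le n$, the split $AS(\Delta_n)$ has exactly $\binom{n+1}{p}$ interior $p$-faces — the lone interior vertex when $p=0$ and the $n+1$ maximal simplices when $p=n$ — each face with $p \ge 1$ arising by coning a $(p-1)$-face of $\partial\Delta_n$ over the central vertex. At such a face there are $n-p+1$ incident hyperplanes, so by \eqref{local} the local ideal $I_\gamma$ is generated by the $(r+1)$-st powers of the $n-p$ coordinate forms together with their sum, the remaining $p+1$ variables being free. Using the expected behaviour of the associated fatpoint scheme recorded above (the coordinate points and $(1:\cdots:1)$) together with inverse systems \cite{gs}, the Hilbert function $\dim(S/I_\gamma)_k$ is an explicit alternating sum of binomial coefficients, convolved with $\binom{k+p}{p}$, the Hilbert function of the $p+1$ free variables.

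Substituting these face counts and local Hilbert functions into the displayed formula produces a double alternating sum of binomial coefficients, indexed by the face dimension $p$ and by the inclusion–exclusion terms of each $I_\gamma$. The main obstacle is the ensuing binomial simplification: I expect the inner sums to collapse by Vandermonde-type identities and the outer alternating sum over $p$ to telescope, leaving the single leading $\binom{k+n}{n}$ together with a small number of shifted binomials. Matching the survivors to Conjecture 1.1 — in particular showing that the two branches emerge according to whether the shift $\tfrac{(r+1)(n+1)}{2}$ is integral, i.e.\ the parity of $r$, and that all negative binomial contributions vanish in the relevant degree range — is the delicate bookkeeping, but it is a finite verification once the local Hilbert functions are in hand.
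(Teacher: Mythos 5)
Your proposal is correct and takes essentially the same route as the paper's own proof: Theorem 4.10 of \cite{s1} converts freeness into vanishing of the lower homology of $\RJ$, Corollary 4.12 then gives $\dim C^r_k$ as the alternating sum of the graded dimensions of the terms, and these are computed via inverse systems from the local ideals in Equation \eqref{local}. Your extra detail --- the count $\binom{n+1}{p}$ of interior $p$-faces and the convolution of the Artinian local Hilbert function with $\binom{k+p}{p}$ --- is accurate and fills in bookkeeping the paper leaves implicit, with the final binomial verification deferred in both treatments.
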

\begin{proof}
By Theorem 4.10 of \cite{s1}, $C^r(\widehat{AS(\Delta_n)})$ is free iff 
$H_i(\mathcal{R/J})=0$ for all $i < n$, and in this case, 
Corollary 4.12 of \cite{s1} shows that the dimension of 
$C^r_k(\widehat{AS(\Delta_n)})$ is the alternating sum of the dimensions 
of the degree $k$ graded components of the modules appearing in
the chain complex $\mathcal{R/J}$. These are the 
dimensions of the ideals in Equation (1), which 
can be worked out using inverse systems.
\end{proof}
Conjecture 1.1 could hold even if  $C^r(\widehat{AS(\Delta_n)})$ is not free: 
contributions from nonzero $H_i(\mathcal{R/J})$ could cancel out. 
However, in the next section, we will see that Conjecture 1.1 
is a consequence of a stronger fact: $C^r(\widehat{AS(\Delta_n)})$ is a free module.
\section{Conjecture 1.1 via Arrangement Theory}
In \cite{br}, Billera-Rose note that $C^r(\hat \Delta)$ may be computed
as the kernel of the matrix
\[
\phi=\left[ \partial_n \Biggm| \begin{array}{*{3}c}
l_{\tau_1}^{r+1} & \  & \  \\
\ & \ddots & \  \\
\ & \ & l_{\tau_m}^{r+1}
\end{array} \right], 
\]
where $\partial_n$ is the $n^{th}$ simplicial boundary map in relative 
homology; each row encodes the smoothness condition
across a codimension one face $\tau_i$. The key observation needed for the proof 
of Conjecture 1.1 relates the matrix above to a matrix which appears in
hyperplane arrangement theory, and is motivated by a result of \cite{s2},
where it appears in a different guise. First, we need some definitions. 
A hyperplane arrangement is a collection of hyperplanes 
\[
\A = \bigcup\limits_{i=1}^dH_i \subseteq \K^n,
\]
where $\K$ is typically $\R$ or $\C$. An arrangement is {\em central}
if all hyperplanes $H_i$ pass thru the origin, which is true iff for 
every $H_i=V(l_i)$, the linear form $l_i$ is homogeneous. Given a simplicial complex 
$\Delta \subseteq \R^n$, by construction the codimension one faces of
the cone $\hat \Delta$ yield a central arrangement. 
\begin{rmk}
Embedding $\Delta \subseteq \R^n$ in the hyperplane 
$x_{n+1} =1 \subseteq \R^{n+1}$ and coning with the origin yields
the complex $\widehat{\Delta}$. The set of 
splines on $\widehat{\Delta}$ has the additional structure of
being a graded $\R[x_0,\ldots, x_n]$ module. Homogenizing 
splines in $C^r_k(\Delta)$ using the variable $x_{n+1}$ yields
the formula $\dim C^r_k(\Delta) = \dim C^r(\widehat{\Delta})_k$.
The point is that coning is necessary in order to apply 
the tools of commutative algebra.
\end{rmk}
One of the main algebraic objects associated to a central arrangement
is the graded $S$--module $D({\mathcal A})$ of vector fields
tangent to $\A$. Recall $\Der_{\R}(S)$ is the free $S$--module with
basis $\partial/\partial(x_i)$. 
\begin{defn}
The module of derivations of $\A$ is 
\[
D({\mathcal A}) = \{ \theta \mid \theta(l_i) \in \langle l_i \rangle \mbox{ for all }V(\l_i) \in \A \} \subseteq Der_{\R}(S),\]
and the module of m-multiderivations of $\A$ is 
\[
D^m({\mathcal A}) = \{ \theta \mid \theta(l_i) \in \langle l_i^m \rangle \mbox{ for all }V(\l_i) \in \A \} \subseteq Der_{\R}(S).
\]
An arrangement $\mathcal{A}$ is m-multifree if $D^m({\mathcal A}) \simeq
\oplus S(-d_i)$. In this case the $d_i$ are called the 
m-multiexponents of ${\mathcal A}$.
\end{defn}
It follows from the definition that the m-multiderivations can be computed as the 
kernel of a matrix similar to $\phi$ above: $D^m(\A)$ is the kernel of
\[
{\small \psi = \left[ \delta_n \Biggm| \begin{array}{*{3}c}
l^m_1 & \  & \  \\
\ & \ddots & \  \\
\ & \ & l^m_d
\end{array} \right],}
\]
where $\delta_n$ is a $d \times n$ matrix, such that if $l_j =\sum a^j_i x_i$, 
then the $j^{th}$ row of $\delta_n$ is $[a^j_1,\ldots, a^j_d]$. 
To relate $AS(\Delta_n)$ to arrangements, we need the following:
\begin{defn}\label{AN}
The reflection arrangement of type A$_n$ is defined by the
reflecting hyperplanes of $SL(n)$, which are given by 
$\bigcup_{1 \le i < j \le n+1}V(x_j-x_i) = \An \subseteq \R^{n+1}$.
\end{defn}

\begin{thm}[Terao, \cite{t}]
$D^m(\mbox{A}_n)$ is m-multifree for all $m$, with exponents 
\[ 
\begin{cases} 
\frac{m(n+1)}{2}, \ldots,\frac{m(n+1)}{2} \mbox{ if m is even}\\
\frac{(m-1)(n+1)}{2}+1, \cdots, \frac{(m-1)(n+1)}{2}+n, \mbox{ if m is odd}.
\end{cases}
\] 
\end{thm}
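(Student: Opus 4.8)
The plan is to establish freeness through Saito's criterion, after first recasting $D^m(A_n)$ in completely explicit coordinates. Writing a derivation as $\theta = \sum_{k=1}^{n+1} f_k\,\partial/\partial x_k$, the defining condition $\theta(x_j-x_i)\in\langle (x_j-x_i)^m\rangle$ is just $(x_j-x_i)^m \mid (f_j-f_i)$ for every pair $i<j$, since $\theta(x_j-x_i)=f_j-f_i$. Thus $D^m(A_n)$ is identified with the module of tuples $(f_1,\dots,f_{n+1})\in S^{n+1}$ whose components agree to order $m$ along each diagonal $x_i=x_j$. The constant tuple $(1,\dots,1)$, i.e.\ the Euler-type derivation $\sum_k \partial/\partial x_k$, lies in $D^m(A_n)$ in degree $0$ for every $m$ and accounts for the suppressed exponent $0$; the remaining $n$ exponents are the ones listed.

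Next I would fix the verification tool. By the Saito criterion, homogeneous $\theta^{(0)},\dots,\theta^{(n)}\in D^m(A_n)$ form a free basis exactly when the determinant of the $(n+1)\times(n+1)$ coefficient matrix is a nonzero scalar multiple of the defining polynomial raised to the multiplicity,
\[ \det\bigl[\theta^{(a)}(x_k)\bigr] = c\prod_{1\le i<j\le n+1}(x_j-x_i)^m \qquad (c\in\R\setminus\{0\}). \]
Before constructing anything I would sanity-check the claimed exponents against this. One has $\deg\prod_{i<j}(x_j-x_i)^m = m\binom{n+1}{2}=\tfrac{mn(n+1)}{2}$, and summing the listed exponents together with the trivial $0$ gives $n\cdot\tfrac{m(n+1)}{2}$ in the even case and $n\cdot\tfrac{(m-1)(n+1)}{2}+\tfrac{n(n+1)}{2}=\tfrac{mn(n+1)}{2}$ in the odd case, matching exactly. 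This degree bookkeeping is what pins down that no basis element is missing.

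The construction is the heart of the matter. Two base cases are immediate: for $m=0$ we have $D^0(A_n)=\Der(S)$, free of rank $n+1$ with all exponents $0$, matching the even formula at $m=0$; for $m=1$ there is the classical basis $\theta_k=\sum_i x_i^k\,\partial/\partial x_i$, $k=0,1,\dots,n$, since $x_i^k-x_j^k$ is divisible by $x_i-x_j$, and the coefficient matrix is Vandermonde with determinant $\prod_{i<j}(x_j-x_i)$, so Saito's criterion confirms the exponents $0,1,\dots,n$ (the odd formula at $m=1$). To reach general $m$ I would induct in steps of two on parity, using the \emph{primitive derivation} $D$ of the braid arrangement --- the derivation dual to the top basic invariant of $S_{n+1}$, whose degree is the Coxeter number $h=n+1$. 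Following Solomon--Terao, the covariant derivative along $D$ furnishes a degree-homogeneous isomorphism of the essentialized modules $D^{m}(A_n)$ and $D^{m+2}(A_n)$ that raises every exponent by $h=n+1$, the trivial $0$ persisting. Since $\tfrac{(m-1)(n+1)}{2}+(n+1)=\tfrac{(m+1)(n+1)}{2}$ and $\tfrac{m(n+1)}{2}+(n+1)=\tfrac{(m+2)(n+1)}{2}$, each step reproduces the next same-parity line of the formula, and the two base cases start the two parities.

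The main obstacle is precisely this inductive engine: showing that the primitive-derivation operator carries $D^m(A_n)$ \emph{isomorphically} onto a full-rank free submodule of $D^{m+2}(A_n)$, rather than merely into it, is where the Weyl-group invariant theory is genuinely used --- the flatness of the $W$-invariant metric and the nondegeneracy of the connection $\nabla_D$ --- and it is the step that does not collapse to a routine determinant evaluation. An alternative I would keep in reserve is multiarrangement addition--deletion in the sense of Abe--Terao--Wakefield, inducting on the number of hyperplanes; but there the restricted arrangement inherits a nonconstant Euler multiplicity, and controlling that bookkeeping for the braid arrangement looks at least as delicate as the invariant-theoretic route. For that reason I would treat Saito's criterion together with the primitive derivation as the primary line of attack.
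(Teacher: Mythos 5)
The paper offers no proof of this statement at all: it is quoted verbatim from Terao \cite{t}, and the paper's actual work (Theorem 3.6) is the reduction of Conjecture 1.1 to it. So the comparison is really with Terao's Inventiones paper, and your sketch reconstructs its strategy faithfully: the identification of $D^m(\mbox{A}_n)$ with tuples $(f_1,\dots,f_{n+1})$ satisfying $(x_j-x_i)^m \mid f_j - f_i$, the multiarrangement version of Saito's criterion (due to Ziegler --- worth attributing, since the classical criterion does not literally apply when $m>1$), the base cases $\Der_{\R}(S)$ at $m=0$ and the Vandermonde basis $\theta_k=\sum_i x_i^k\,\partial/\partial x_i$ at $m=1$, and the two-step induction on $m$ via the primitive derivation, with each step shifting exponents by the Coxeter number $h=n+1$. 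Your degree bookkeeping ($\sum$ of exponents $=\frac{mn(n+1)}{2}=\deg\prod_{i<j}(x_j-x_i)^m$ in both parities) is correct, as is your handling of the non-essential direction: the exponent-$0$ derivation $\sum_k \partial/\partial x_k$ is exactly the one the paper later says Terao's formula omits.

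That said, as a standalone proof the proposal has the gap you yourself flag, and it is not a small one: the assertion that the covariant derivative $\nabla_D$ along the primitive derivation induces an exponent-shifting bijection between (essentialized) $D^m$ and $D^{m+2}$ \emph{is} the theorem, and nothing in the sketch establishes it. Making it precise requires the invariant-theoretic machinery of Saito's flat structure --- well-definedness of $\nabla_D$ on the relevant modules, its compatibility with the order-$m$ tangency conditions, and nondegeneracy coming from the $W$-invariant flat metric --- none of which collapses to a determinant computation; indeed the Saito--Ziegler determinant is how one \emph{certifies} a candidate basis, not how one produces it, so the criterion plus degree count alone cannot close the induction (a determinant could vanish or acquire extraneous factors without the structural input). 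Your fallback via Abe--Terao--Wakefield addition--deletion is anachronistic relative to \cite{t} (it postdates and partly builds on Terao's result) and, as you note, the Euler multiplicities on restrictions make it no easier. In short: correct strategy, honest identification of the hard step, but the central lemma is assumed rather than proved --- which is presumably why the paper cites Terao instead of reproving him.
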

Terao proves the theorem for all reflection groups, not just
those of type A. For the free $S = \R[x_0,\ldots,x_n]$ module 
$S(-a)$ generated in degree $a$, 
\[
\dim_{\R}S(-a)_k = {n-a+k \choose n}.
\]
Terao's formula does not include the constant derivation, 
which corresponds to the constant splines contributing the
term ${k+n \choose n}$ in Conjecture 1.1. So it follows 
from Terao's theorem that the dimension of $D^{r+1}(\mbox{A}_n)_k$ 
is equal to the formula of Conjecture 1.1. Before we prove
the theorem, we give a simple motivating example.
\begin{exm}
Let $\Delta_2$ have vertices $\{{\bf e}_0=(0,0),{\bf e}_1=(1,2),{\bf e}_2=(2,1)\}$, and place
a central vertex at $(1,1)$. Order the triangles
of $\Delta_2$ as $[{\bf e}_0, {\bf e}_1],[{\bf e}_0, {\bf e}_2],[{\bf e}_1, {\bf e}_2]$ and the rays as ${\bf e}_0, {\bf e}_1, {\bf e}_2$. Note that
the linear forms vanishing on the rays are $x_1-x_2, x_1-1, x_2-1$.
Homogenizing with respect to $x_3$, we obtain
\[
\phi = {\small \left[ \begin{array}{*{3}c}
-1 & 1&0  \\
1 & 0 &-1\\
0& -1 &1
\end{array}
\begin{array}{*{3}c}
(x_1-x_2)^{r+1} &  0  & 0\\
 0   & (x_1-x_3)^{r+1}  &0 \\
 0   &0   &(x_2-x_3)^{r+1} 
\end{array} \right]}
\]
On the other hand, the matrix computing  $D^{r+1}(\mbox{A}_2)$is
\[
\psi= {\small \left[ \begin{array}{*{3}c}
1 & -1&0  \\
1 & 0 &-1\\
0& 1 &-1
\end{array}
\begin{array}{*{3}c}
(x_1-x_2)^{r+1} &  0  & 0\\
 0   & (x_1-x_3)^{r+1}  &0 \\
 0   &0   &(x_2-x_3)^{r+1} 
\end{array} \right]}
\]
The signs in first and third rows of the matrices differ, but this can be
fixed by reversing the orientation of ${\bf e}_0$ and ${\bf e}_2$. 
So for the choice of vertices above 
\[
C^r(\widehat{AS(\Delta_2)}) \simeq D^{r+1}(\mbox{A}_2).
\]
\end{exm}
\begin{thm}\label{Aroot}
$C^r(\widehat{AS(\Delta_n)}) \simeq D^{r+1}(\An)$. 
\end{thm}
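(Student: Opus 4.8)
The plan is to realize both sides as kernels of matrices of the same shape and then match those matrices up to invertible row and column operations. By the Billera--Rose description, $C^r(\widehat{AS(\Delta_n)})$ is the kernel of $\phi = [\partial_n \mid \mathrm{diag}(l_\tau^{r+1})]$, and by the defining kernel description, $D^{r+1}(\An)$ is the kernel of $\psi = [\delta_n \mid \mathrm{diag}(l_j^{r+1})]$. Both have $\binom{n+1}{2}$ rows (one per interior codimension-one face, resp.\ per hyperplane $x_i-x_j$) and $(n+1)+\binom{n+1}{2}$ columns. Since scaling any row of such a block matrix by a unit of $S$, or scaling any column by a unit, induces an isomorphism of kernels, it suffices to exhibit a realization of $AS(\Delta_n)$ together with orientations for which $\phi$ and $\psi$ coincide after such unit operations.

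First I would fix a convenient realization. Because any two interior placements of the central vertex give affinely equivalent complexes, and an affine change of coordinates induces a graded $S$-module isomorphism on splines, we are free to choose it. Take the vertices of $\Delta_n$ to be the standard basis vectors $\mathbf e_1,\dots,\mathbf e_{n+1}\in\R^{n+1}$ in the hyperplane $\sum x_i = 1$, and place the central vertex at the barycenter $\tfrac1{n+1}\mathbf 1$. Coning from the origin, the central vertex spans the diagonal line $\R\cdot\mathbf 1$, which lies on every hyperplane $x_i=x_j$. The interior face obtained by coning the ridge of $\Delta_n$ missing vertices $i$ and $j$ is spanned by $\mathbf 1$ together with $\{\mathbf e_k : k\ne i,j\}$, and one checks this span is exactly $V(x_i-x_j)$. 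Thus, indexing interior faces by pairs $\{i,j\}$ and top simplices by the omitted vertex $k$, the forms $l_\tau$ are scalar multiples of the roots $x_i-x_j$, so the diagonal blocks of $\phi$ and $\psi$ agree up to unit column scalings.

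It remains to reconcile the left blocks $\partial_n$ and $\delta_n$. The ridge missing $\{i,j\}$ is a facet of exactly the two top simplices omitting $i$ and omitting $j$, so each row of $\partial_n$ has precisely two nonzero entries, in columns $i$ and $j$; this is the support of the row $[\dots,+1,\dots,-1,\dots]$ of $\delta_n$ coming from $x_i-x_j$. The entries of $\partial_n$ are simplicial incidence signs, which need not match on the nose. Ordering the vertices of $\sigma_k$ as $(v_1,\dots,\widehat{v_k},\dots,v_{n+1},\text{center})$, a direct computation shows the face $\tau_{ij}$ appears with sign $(-1)^{j}$ in column $i$ and sign $(-1)^{i-1}$ in column $j$. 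Reorienting the top simplex $\sigma_k$ by $\epsilon_k=(-1)^k$ (scaling the $k$-th left column of $\phi$) and the face $\tau_{ij}$ by $\eta_{ij}=(-1)^{\,j-i}$ (scaling that row) then converts these into the $+1,-1$ pattern of $\delta_n$; the same row scalars turn each diagonal entry into a unit multiple of $(x_i-x_j)^{r+1}$, which a final column rescaling normalizes. Hence $\phi$ and $\psi$ become identical, their kernels coincide, and $C^r(\widehat{AS(\Delta_n)}) \simeq D^{r+1}(\An)$.

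The main obstacle is exactly this last sign bookkeeping. The supports of $\partial_n$ and $\delta_n$ match for free from the incidence combinatorics: the bipartite incidence structure is that of the complete graph $K_{n+1}$, with edges $\{i,j\}$ meeting vertices $i$ and $j$. But the two matrices differ by a $\{\pm1\}$-valued $1$-cochain on $K_{n+1}$, and one must verify this cochain is a coboundary, so that it can be removed by \emph{independent} row and column sign flips rather than being obstructed by a cycle. The explicit solution $\epsilon_k=(-1)^k$, $\eta_{ij}=(-1)^{\,j-i}$ does this; equivalently, the triangle consistency $\rho_{ab}\rho_{bc}\rho_{ca}=1$ holds on every $3$-cycle, which is what makes the reorientation global and the isomorphism well defined. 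Everything else (identifying the kernels with the modules, and the fact that unit operations preserve kernels) is formal.
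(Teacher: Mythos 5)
Your overall route is the same as the paper's: realize both modules as kernels of the Billera--Rose matrix $\phi$ and the arrangement matrix $\psi$, choose a realization for which the interior wall forms become the roots $x_i-x_j$, and reconcile the two left blocks by reorienting cells and faces. Your barycentric model (vertices $\mathbf{e}_1,\dots,\mathbf{e}_{n+1}$ in the hyperplane $\sum x_i=1$, center on the diagonal $\R\cdot\mathbf{1}$) is a cleaner way to produce the root forms than the paper's explicit vertices ${\bf v}=\sum\epsilon_i$, ${\bf e}_i={\bf v}+\epsilon_i$ in $\R^n$ followed by the determinantal computation $L_{ij}=\det(M_{ij})$; your identification of the coned wall for $\{i,j\}$ with $V(x_i-x_j)$ checks out. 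Your sign bookkeeping is in fact \emph{more} careful than the paper's, which dismisses the issue with ``up to multiplying by $-1$.'' Your computed incidence signs ($(-1)^j$ in column $i$, $(-1)^{i-1}$ in column $j$) are correct for your vertex orderings, and the fix $\epsilon_k=(-1)^k$, $\eta_{ij}=(-1)^{j-i}$ works: the scaled entries are $(-1)^{(j-i)+i+j}=+1$ and $(-1)^{(j-i)+j+(i-1)}=-1$. Note that for $n=2$ the row of $\tau_{12}$ is $(+1,+1,0)$, so row flips alone cannot produce the $(+1,-1)$ pattern; your observation that both row and column flips are needed, and that the discrepancy cochain must be (and is) a coboundary, is a genuine point the paper glosses over.

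The one real flaw is the sentence ``any two interior placements of the central vertex give affinely equivalent complexes.'' This is false: any affine isomorphism between two Alfeld splits must carry boundary vertices to boundary vertices and the unique interior vertex to the interior vertex, and an affine map is determined by the images of the $n+1$ simplex vertices, so it sends barycenter to barycenter; hence a split with a non-barycentric center is not affinely equivalent to your barycentric model. Since the theorem concerns an arbitrary interior placement, your reduction to the chosen realization has a hole as written. The repair is exactly the paper's closing remark: after coning, the relevant data is a set of $n+2$ points in general position in $\p^n$ (equivalently $n+2$ lines through the origin in $\R^{n+1}$), and any two such configurations are \emph{projectively} equivalent; a linear change of coordinates on $\R^{n+1}$ transforms the wall forms by substitution and unit scalars while leaving the incidence blocks untouched, hence induces a graded isomorphism of kernels. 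With that substitution for your affine-equivalence sentence, your proof is complete and coincides with the paper's, with better sign detail.
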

\begin{proof}
In the matrix for $\psi$, the rows are indexed by pairs $\{i,j\}$ with 
$1 \le i < j \le n+1$, and the columns are indexed
by $m \in \{1,\ldots, n+1\}$. In the row of $\delta_n$ indexed by $\{i,j\}$,
there is a $+1$ in column $i$ and a $-1$ in column $j$.

Choose vertices for $AS(\Delta_n)$ as follows: for $i \in \{1,\ldots,n\}$, 
let $\epsilon_i$ be the $i^{th}$ standard basis vector of $\R^n$, and define 
\[
\begin{array}{cccc}
{\bf v} &= &\sum_{i=1}^n \epsilon_i &\\
{\bf e}_0 &= & {\bf 0}&\\
{\bf e}_i &= & {\bf v} + \epsilon_i,& i \in \{1,\ldots,n\}
\end{array}
\]
Let $\Delta_n =\conv\{{\bf e}_0, \ldots, {\bf e}_n \}$,
and let ${\bf v}$ be the interior vertex of $AS(\Delta_n)$. 
In the matrix $\phi$, the component $\partial_n$ also has a single $+1$ and $-1$ in
each row. The columns are indexed by the maximal faces of $\Delta_n$.
Since $\Delta_n$ has $n+1$ vertices, each maximal face is determined
by leaving out a single vertex. The rows of $\phi$ are indexed by the 
codimension one faces, which are determined by leaving out a 
pair $\{i,j\}$ of vertices. 

Index the columns as 
$\overline{{\bf e}_n}, \ldots, \overline{{\bf e}_1}, \overline{{\bf e}_0}$,
and the rows as $\overline{{\bf e}_n{\bf e}_{n-1}}, \ldots, \overline{{\bf e}_1{\bf e}_0}$.
Then in the row indexed by $\overline{{\bf e}_i{\bf e}_{j}}$ with $i<j$,
there will be a $+1$ in the column indexed by $\overline{{\bf e}_j}$, and 
a $-1$ in the column indexed by $\overline{{\bf e}_i}$, up to multiplying
by $-1$, which does not change the smoothness condition across faces.
In particular, the $\partial_n$ block of $\phi$ agrees with the $\delta_n$
block of $\psi$. We need only check that the equations of the hyperplanes 
agree, which follows from our choice of vertex locations of 
$AS(\Delta)_n$. To be explicit, let 
\[
M = {\small \left[ \begin{array}{*{6}c}
x_1 &  \vdots       &           &           &        &  \vdots         \\
x_2 & {\bf v} & {\bf e}_0 & {\bf e}_1 & \cdots &  {\bf e}_n\\
\vdots & \vdots      &          &           &        &   \vdots         \\
x_{n+1}& 1     &1         & 1         & \cdots & 1
\end{array} \right]}
\]
If $L_{ij}$ is the facet which misses ${\bf e}_i$ and ${\bf e}_j$
and $M_{ij}$ is the matrix obtained from $M$ by deleting the columns
containing ${\bf e}_i$ and ${\bf e}_j$, then then $L_{ij}= \det(M_{ij})$.
Since any two sets of $n+2$ points in general position in $\p^n$ are 
projectively equivalent, the result is in fact independent of the
choice of ${\bf v}, {\bf e}_0, \ldots, {\bf e}_n$.
\end{proof}
\begin{exm} In the case of Example 3.5, the matrix M is 
\[
{\small \left[ \begin{array}{*{5}c}
x_1 & 1       &0          &  1        & 2         \\
x_2 & 1       &0          & 2         &  1\\
x_3 & 1       &1          & 1         & 1
\end{array} \right]}
\mbox{ so for example }L_{01}= \det 
{\small \left[ \begin{array}{*{3}c}
x_1        &  1        & 2         \\
x_2         & 1         &  1\\
x_3          & 1         & 1
\end{array} \right]} = x_2-x_3,
\]
which is the equation of the line $L_{01}$ connecting ${\bf v}$ and ${\bf e}_2$,
as expected. Note that combining Theorem~\ref{Aroot} with Terao's theorem yields
\end{exm}
\begin{cor}
$C^r(\widehat{AS(\Delta_n)})$ is a free $S$-module, and Conjecture 1.1 holds.
\end{cor}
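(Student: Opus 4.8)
The plan is to deduce both assertions by feeding \thmref{Aroot} into Terao's theorem and then reading off a Hilbert function. First I would use the graded isomorphism $C^r(\widehat{AS(\Delta_n)}) \simeq D^{r+1}(\An)$ of \thmref{Aroot}. Terao's theorem (\cite{t}), applied with $m = r+1$, asserts that $D^{r+1}(\An)$ is $(r+1)$-multifree, i.e. $D^{r+1}(\An) \simeq \bigoplus_i S(-d_i)$ for the listed exponents $d_i$. Since freeness of a module is preserved under isomorphism, $C^r(\widehat{AS(\Delta_n)})$ is itself a free $S$-module; this is the first claim of the corollary.

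For the dimension formula I would compute graded dimensions directly from the exponents, rather than going through the inverse-systems route of the lemma in Section 2. Using $\dim_\R S(-a)_k = {n-a+k \choose n}$, freeness gives
\[
\dim C^r_k(AS(\Delta_n)) = \dim_\R D^{r+1}(\An)_k = \sum_i {n-d_i+k \choose n}.
\]
Here the full list of exponents of $D^{r+1}(\An)$ over $S = \R[x_0,\ldots,x_n]$ consists of the $n$ exponents produced by Terao's formula together with one extra exponent equal to $0$, coming from the translation (``constant'') derivation $\sum_i \partial/\partial x_i$, which annihilates every $x_j - x_i$ and hence lies in $D^m(\An)$ for all $m$. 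The degree-$0$ exponent contributes the term ${n+k \choose n} = {k+n \choose n}$, matching the leading summand of Conjecture 1.1.

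It then remains to split on the parity of $r$ and match the remaining $n$ binomial terms. When $r$ is odd, $m=r+1$ is even, so Terao's $n$ exponents are all equal to $\frac{(r+1)(n+1)}{2}$ and contribute $n {k+n-\frac{(r+1)(n+1)}{2} \choose n}$, which is exactly the odd branch. When $r$ is even, $m=r+1$ is odd, so the exponents are $\frac{r(n+1)}{2}+1,\ldots,\frac{r(n+1)}{2}+n$; writing $d_i = \frac{r(n+1)}{2}+j$ for $j=1,\ldots,n$ gives the sum ${k+n-1-\frac{r(n+1)}{2} \choose n} + \cdots + {k-\frac{r(n+1)}{2} \choose n}$, the even branch. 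Combining these proves Conjecture 1.1.

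I do not expect a genuine obstacle here: the substantive work has already been done in \thmref{Aroot} and in Terao's theorem, so the corollary is essentially a matter of assembling them. The only point demanding care is the bookkeeping of exponents — one must remember that Terao's list omits the degree-$0$ translation derivation, since this is precisely what supplies the constant term ${k+n \choose n}$ and keeps the count of exponents equal to $\dim_\R \Der_\R(S) = n+1$, and one must align the index ranges in the even case so that the binomials reproduce the stated expression term by term.
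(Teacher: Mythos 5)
Your proposal is correct and follows essentially the same route as the paper: freeness of $C^r(\widehat{AS(\Delta_n)})$ comes from transporting Terao's theorem through the isomorphism of Theorem~3.7, and the dimension formula is read off from the Hilbert function $\dim_\R S(-a)_k = \binom{n-a+k}{n}$ applied to the multiexponents, with the degree-$0$ translation derivation supplying the $\binom{k+n}{n}$ term --- exactly the bookkeeping the paper carries out in the discussion following Terao's theorem. Your parity split and index alignment in the even-$r$ case match the paper's intended computation term by term, so there is nothing to correct.
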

\noindent {\bf Closing Remarks}:  Macaulay2 computations were
essential to our work; we are investigating extensions of the 
results here to other types. The Macaulay2 package is available at
{\tt http://www.math.uiuc.edu/Macaulay2}.
\renewcommand{\baselinestretch}{1.0}
\small\normalsize 

\bibliographystyle{amsalpha}

\end{document}